\newcommand{\wrt}{with respect to}
\newcommand{\G}{\mathcal{G} }
\newcommand{\Ac}{\mathcal{A}}
\newcommand{\m}{\mathfrak{m} }
\newcommand{\M}{\mathcal{M} }
\newcommand{\C}{\mathcal{C} }
\newcommand{\V}{\mathcal{V} }
\newcommand{\K}{\mathcal{K} }
\newcommand{\Hc}{\mathcal{H} }
\newcommand{\D}{\mathcal{D} }
\newcommand{\Z}{\mathbb{Z} }
\newcommand{\rt}{\rightarrow}
\newcommand{\Xb}{\mathbf{X}_\bullet}
\newcommand{\Zb}{\mathbf{Z}_\bullet}
\newcommand{\Yb}{\mathbf{Y}_\bullet}
\newcommand{\Kb}{\mathbf{K}_\bullet}
\newcommand{\Pb}{\mathbf{P}_\bullet}
\newcommand{\Qb}{\mathbf{Q}_\bullet}
\newcommand{\Fb}{\mathbf{F}_\bullet}
\newcommand{\Wb}{\mathbf{W}_\bullet}
\newcommand{\Ub}{\mathbf{U}_\bullet}
\newcommand{\Vb}{\mathbf{V}_\bullet}
\newcommand{\image}{\operatorname{image}}
\newcommand{\Supp}{\operatorname{Supp}}
\newcommand{\Spec}{\operatorname{Spec}}
\newcommand{\mmod}{\operatorname{mod}}
\newcommand{\Hom}{\operatorname{Hom}}
\theoremstyle{plain}
\newtheorem{theorem}{Theorem}[section]
\newtheorem{corollary}[theorem]{Corollary}
\newtheorem{lemma}[theorem]{Lemma}
\newtheorem{proposition}[theorem]{Proposition}
\theoremstyle{definition}
\theoremstyle{remark}
\begin{document}

\title[Dimension filtration]{Dimension filtration of the bounded Derived category of a Noetherian ring}
\author{Tony~J.~Puthenpurakal}
\date{\today}
\address{Department of Mathematics, IIT Bombay, Powai, Mumbai 400 076}

\email{tputhen@math.iitb.ac.in}
\subjclass{Primary 13D09  ; Secondary 13D02}
\keywords{bounded derived category, bounded t-structures, Krull-Remak-Schmidt categories }

 \begin{abstract}
Let $A$ be a Noetherian ring of dimension $d$ and let  $\D^b(A)$ be the bounded derived category of $A$. Let $\D_i^b(A)$ denote the thick subcategory of $\D^b(A)$ consisting of complexes $\Xb$ with $\dim H^n(\Xb) \leq i$ for all $n$. Set $\D_{-1}^b(A) = 0$. Consider the Verdier quotients $\C_i(A) = \D_i^b(A)/\D_{i-1}^b(A)$. We show for $i = 0, \ldots, d$, $\C_i(A)$ is a Krull-Remak-Schmidt triangulated category with a bounded $t$-structure. We identify its heart. We also prove that if $A$ is regular then $\C_i(A)$ has AR-triangles.
We also prove that
$$ \C_i(A) \cong \bigoplus_{\stackrel{P}{\dim A/P = i}} \D_0^b(A_P). $$
\end{abstract}
 \maketitle
\section{introduction}
Let $A$ be a Noetherian ring of dimension $d$ and let  $\D^b(A)$ be the bounded derived category of $A$.  Let $\D_i^b(A)$ denote the thick subcategory of $\D^b(A)$ consisting of complexes $\Xb$ with $\dim H^n(\Xb) \leq i$ for all $n$. Set $\D_{-1}^b(A) = 0$.
We have a filtration of thick subcategories of $\D^b(A)$
\[
0 \subseteq \D^b_0(A) \subseteq  \D^b_1(A) \subseteq \cdots \subseteq \D^b_i(A) \subseteq \cdots \subseteq \D_{d-1}^b(A) \subseteq \D_d^b(A) = \D^b(A).
\]
It is natural to investigate the Verdier quotients $\C_i(A) = \D^b_i(A)/\D^b_{i-1}(A)$ for $i = 0, \ldots, d$.
We first prove
\begin{theorem}\label{t-struc}
The triangulated categories $\C_i(A)$ has a bounded $t$-structure for $i = 0, 1, \ldots, d$.
\end{theorem}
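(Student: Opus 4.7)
The plan is to first restrict the standard $t$-structure on $\D^b(A)$ to each thick subcategory $\D^b_i(A)$, and then to descend the restriction to the Verdier quotient $\C_i(A) = \D^b_i(A)/\D^b_{i-1}(A)$ by the standard gluing criterion.

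I would begin by observing that whenever $\Xb \in \D^b_i(A)$, the standard soft truncations $\tau^{\leq n}\Xb$ and $\tau^{\geq n}\Xb$ again lie in $\D^b_i(A)$: each of their cohomology modules is either $0$ or equals some $H^m(\Xb)$, which has dimension at most $i$. Consequently the pair $(\D^b_i(A)\cap \D^{\leq 0}(A),\ \D^b_i(A)\cap \D^{\geq 0}(A))$ is a bounded $t$-structure on $\D^b_i(A)$, whose heart is the Serre subcategory $\M_i := \{M \in \mmod(A) : \dim M \leq i\}$ of $\mmod(A)$. Boundedness is inherited directly from the standard $t$-structure on $\D^b(A)$.

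The second step is to invoke the standard descent lemma for $t$-structures: if $\mathcal{S}$ is a thick subcategory of a triangulated category $\mathcal{T}$ endowed with a $t$-structure whose truncation functors preserve $\mathcal{S}$, then $\mathcal{T}/\mathcal{S}$ inherits a $t$-structure whose aisles are the essential images of those of $\mathcal{T}$. In our setting $\mathcal{S} = \D^b_{i-1}(A)$ is thick in $\mathcal{T} = \D^b_i(A)$ by definition, and it is stable under the truncations for the same cohomological reason as above; hence $\C_i(A)$ acquires a $t$-structure. Boundedness transfers automatically, since every object of $\C_i(A)$ is represented by some $\Xb \in \D^b_i(A)$ concentrated in a finite range of degrees $[a,b]$, and hence lies in both $\C_i(A)^{\leq b}$ and $\C_i(A)^{\geq a}$.

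The main obstacle I anticipate is the careful verification of the $\Hom$-vanishing axiom for the descended aisles: a morphism in the Verdier quotient $\C_i(A)$ is represented by a roof through an object of $\D^b_i(A)$, and one must show that any such roof from $\C_i(A)^{\leq 0}$ to $\C_i(A)^{\geq 1}$ is equivalent, modulo $\D^b_{i-1}(A)$, to the zero roof. The argument reduces, by applying the truncations to both legs of the roof and using that the cone of any refining map lies in $\D^b_{i-1}(A)$, to the truncation-stability established in the first step; but spelling this out rigorously is where the real diagram chase is needed. Once the $t$-structure is in place, the identification of its heart with the Serre quotient $\M_i/\M_{i-1}$ will presumably be taken up in the subsequent results of the paper.
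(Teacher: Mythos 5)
Your proposal is correct and follows essentially the same route as the paper: restrict the standard $t$-structure to $\D^b_i(A)$, take the images of the two aisles in the quotient, and reduce the only nontrivial axiom (T1) to the observation that the apex $\Wb$ of a roof has $\dim H^n(\Wb)\leq i-1$ for $n>0$ (since the cone of the denominator lies in $\D^b_{i-1}(A)$), so that $\Wb$ may be replaced by its truncation $\tau^{\leq 0}\Wb$. The paper simply carries out this roof argument directly rather than quoting a general descent lemma, finishing with a projective resolution of the truncated apex to see the numerator vanishes.
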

The heart of a triangulated category with a bounded $T$-structure is abelian. It is natural to investigate what is the heart of $\C_i(A)$. To understand we need a few preliminaries.
 Let $\mmod(A)$ denote the category of all finitely generated $A$-modules. Let $\M_i(A)$ denote the category of all finitely generated $A$-modules of dimension $\leq i$. We note that $\M_i(A)$ is a Serre-subcategory of $\mmod(A)$. We prove
 \begin{theorem}\label{heart}
 The heart of $\C_i(A)$  (\wrt \ the $t$-structure in \ref{t-struc}) is isomorphic to $\M_i(A)/\M_{i-1}(A)$.
 \end{theorem}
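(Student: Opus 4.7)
The plan is to obtain the bounded $t$-structure on $\C_i(A)$ by descending the standard $t$-structure on $\D^b(A)$ through the Verdier quotient, and then to identify the heart via the universal property of the Serre quotient of the module category.

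First I would verify that the standard truncations $\tau^{\leq n}, \tau^{\geq n}$ preserve every $\D_j^b(A)$: the cohomologies of a truncation are either those of the original complex or zero, so the bound $\dim H^k \leq j$ is inherited. Hence the standard $t$-structure restricts to a bounded $t$-structure on $\D_i^b(A)$ with heart $\M_i(A)$, and the thick subcategory $\D_{i-1}^b(A)$ is itself compatible with this restricted $t$-structure. A standard descent principle (Beilinson--Bernstein--Deligne) then produces a bounded $t$-structure on $\C_i(A)=\D_i^b(A)/\D_{i-1}^b(A)$ for which the localization $Q$ is $t$-exact; this is the $t$-structure appearing in Theorem \ref{t-struc}. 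The same descent principle shows that its heart is the Serre quotient of $\M_i(A)$ by the Serre subcategory $\M_i(A) \cap \D_{i-1}^b(A)$. Finally, a module $M \in \M_i(A)$, regarded as a stalk complex, lies in $\D_{i-1}^b(A)$ exactly when $\dim M \leq i-1$, so $\M_i(A) \cap \D_{i-1}^b(A) = \M_{i-1}(A)$ and the heart is $\M_i(A)/\M_{i-1}(A)$ as claimed.

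The main obstacle is the rigorous verification of the heart description, i.e.\ that the exact functor $\M_i(A)/\M_{i-1}(A) \rt \text{Heart}(\C_i(A))$ induced by $Q$ is an equivalence. Essential surjectivity follows from boundedness of the $t$-structure, since every object of $\text{Heart}(\C_i(A))$ is the image of a stalk complex of a module in $\M_i(A)$. The nontrivial point is fullness and faithfulness: a morphism in $\C_i(A)$ between the images of modules $M, N \in \M_i(A)$ is represented by a roof $M \xleftarrow{s} \Yb \rt N$ in $\D_i^b(A)$ whose $\text{Cone}(s)$ lies in $\D_{i-1}^b(A)$, and one must rewrite it as a roof through a single module map whose kernel and cokernel lie in $\M_{i-1}(A)$. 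This requires a calculus-of-fractions computation using the truncation functors on $\D_i^b(A)$, which reduces any such roof to the case $\Yb = H^0(\Yb)$ by cutting off cohomology in nonzero degrees (which is detected by $Q$ only through its dimension).
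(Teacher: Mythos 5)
Your overall strategy is the same as the paper's, just packaged as a descent principle. The paper's Theorem \ref{t-struc} constructs the induced $t$-structure on $\C_i(A)$ directly as the image of the standard truncation classes of $\D_i^b(A)$, and its proof of Theorem \ref{heart} carries out exactly the reduction you describe: an exact functor $\M_i(A)/\M_{i-1}(A) \rt \Hc_i$ sending a module to its stalk complex, essential surjectivity by truncating an object of the heart down to its $H^0$ (which is legitimate because the discarded cohomologies have dimension $\leq i-1$), and fullness by rewriting a roof $M \leftarrow \Yb \rt N$ with cone in $\D_{i-1}^b(A)$ as a roof through a single module (the paper does this via a projective resolution $\Pb$ of $\tau^{\leq 0}\Yb$ and the map $\Pb \rt H^0(\Pb)$). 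Two remarks. First, the ``standard descent principle'' you attribute to Beilinson--Bernstein--Deligne is not an off-the-shelf theorem for Verdier quotients by the thick subcategory determined by a Serre subcategory of the heart; BBD treat gluing/recollement. The descent of the $t$-structure and the identification of its heart with the Serre quotient is precisely the content that must be proved, so it cannot be used to shortcut the verification --- which your last paragraph essentially concedes. Second, and more substantively, your sketch addresses essential surjectivity and fullness but not faithfulness: rewriting every roof as a roof through a module map does not show that a morphism of $\M_i(A)/\M_{i-1}(A)$ which becomes zero in $\C_i(A)$ was already zero. For that you need the fact that a morphism vanishing in a Verdier quotient factors through an object of the subcategory being inverted (Neeman, 2.1.26, as the paper cites); applying $H^0$ then shows the module map factors through a module in $\M_{i-1}(A)$ and hence is zero in the Serre quotient. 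With that step supplied, your argument is complete and coincides in substance with the paper's.
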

Next we prove the following surprising result.
\begin{theorem}\label{krs}
$\C_i(A)$ is Krull-Remak-Schmidt for $i = 0, \ldots, d$.
\end{theorem}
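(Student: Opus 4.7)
The plan is to deduce the KRS property from the structure of the heart $\Hc = \M_i(A)/\M_{i-1}(A)$ identified in Theorem \ref{heart}, combined with the bounded $t$-structure provided by Theorem \ref{t-struc}. The strategy has three parts: (a) $\Hc$ is itself a KRS abelian category in which every endomorphism ring is semi-perfect; (b) for every $X \in \C_i(A)$, the endomorphism ring $\operatorname{End}_{\C_i(A)}(X)$ is semi-perfect; and (c) $\C_i(A)$ is idempotent-complete. Once these are in hand, $\C_i(A)$ is KRS: every idempotent in $\operatorname{End}_{\C_i(A)}(X)$ splits by (c), and semi-perfectness from (b) yields a decomposition of $1$ into primitive orthogonal idempotents whose corresponding summands of $X$ are indecomposable with local endomorphism rings.

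For (a), the key observation is that the support-localization functor $M \mapsto (M_P)_P$, indexed by the finite set of primes $P$ with $\dim A/P = i$ lying in $\Supp M$, induces an equivalence of abelian categories
\[
\M_i(A)/\M_{i-1}(A) \;\xrightarrow{\ \sim\ }\; \bigoplus_{\dim A/P = i} \M_0(A_P),
\]
where $\M_0(A_P)$ denotes finite-length $A_P$-modules (indeed $M_P$ is of finite length at such $P$ since $P$ is minimal in $\Supp_A M$). Each summand $\M_0(A_P)$ is KRS because any finite-length $A_P$-module is killed by $(PA_P)^n$ for some $n$, hence is a finitely generated module over the Artinian local ring $A_P/(PA_P)^n$, to which the classical Krull--Schmidt theorem applies; consequently $\operatorname{End}_{\Hc}(M)$ is semi-perfect for every $M \in \Hc$.

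For (b), fix $X \in \C_i(A)$ with non-zero cohomology supported in some finite interval $[a,b] \subset \Z$. The $t$-truncation triangles exhibit $X$ as an iterated extension of the shifts $H^j(X)[-j]$ for $a \leq j \leq b$, and induction along this tower produces a filtration
\[
\operatorname{End}_{\C_i(A)}(X) \;\supseteq\; J \;\supseteq\; J^2 \;\supseteq\; \cdots \;\supseteq\; J^{b-a+1} = 0,
\]
where $J$ is the kernel of the cohomology map $\operatorname{End}_{\C_i(A)}(X) \to \prod_j \operatorname{End}_{\Hc}(H^j(X))$. The quotient is a subring of the finite product of semi-perfect rings on the right, hence semi-perfect, and $J$ is nilpotent, so $\operatorname{End}_{\C_i(A)}(X)$ is a nil extension of a semi-perfect ring and therefore itself semi-perfect.

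For (c), I would invoke the general principle that a triangulated category with a bounded $t$-structure whose heart is idempotent-complete is itself idempotent-complete; the heart $\Hc$ is abelian, hence automatically idempotent-complete. I expect the main technical difficulty to lie in parts (a) and (b): verifying the localization equivalence for the heart --- especially essential surjectivity and fullness after inverting $\M_{i-1}(A)$ --- requires a careful clearing-denominators argument exploiting that for a prime $Q \neq P$ with $\dim A/Q = i$ there exists $s \in P \setminus Q$ acting nilpotently on any finite-length $A_P$-module; and confirming the nilpotency of $J$ inside the Verdier quotient $\C_i(A)$, where morphisms are roofs with denominators in $\D^b_{i-1}(A)$, demands checking that the cohomology and truncation functors on $\D^b_i(A)$ descend correctly to the quotient so that the standard nilpotency argument for the off-diagonal ideal survives the added inverted morphisms.
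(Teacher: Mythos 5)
Your overall architecture (idempotent completeness plus semi-perfect endomorphism rings implies KRS) is a legitimate route, and parts (a) and (c) are essentially sound: (c) is exactly the Le--Chen theorem, and the decomposition of the heart in (a) is in effect the degree-zero layer of the paper's Theorem \ref{decomp} combined with Theorem \ref{heart}. The nilpotency of $J$ in (b) is also fine --- you do not need cohomology or truncation to ``descend'' from $\D^b_i(A)$, since the bounded $t$-structure on $\C_i(A)$ already carries its own truncation functors, and the standard d\'evissage shows that a composite of $b-a+1$ endomorphisms killed by all $H^j$ vanishes.

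The genuine gap is the sentence ``the quotient is a subring of the finite product of semi-perfect rings on the right, hence semi-perfect.'' Semi-perfectness does not pass to subrings: $\Z$ is a subring of the semi-perfect (indeed Artinian) ring $\mathbb{Q}$ but is not semi-perfect; likewise the image of an $A$-algebra map into an Artinian $A_P$-algebra need not be Artinian (e.g.\ $k[x,y]\rt k[x,y]_{(x)}/(x)=k(y)$ has image $k[y]$). So knowing only that $E/J$ embeds into $\prod_j \operatorname{End}_{\Hc}(H^j(X))$ proves nothing about $E/J$. To close this you must show that $E=\operatorname{End}_{\C_i}(X)$ is itself Artinian, and the only visible way to do that is to prove that multiplication by every $s\in S=A\setminus\bigcup P_j$ (the $P_j$ being the coheight-$i$ primes in the support) is already invertible on $X$ in $\C_i(A)$, so that $E$ becomes a finite-length module over the semilocal ring $S^{-1}A$. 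That is precisely the content of the paper's Lemma \ref{s-local} and Proposition \ref{S}, which identify $\Hom_{\C_i}(\Xb,\Yb)$ with $\Hom_{\D^b(S^{-1}A)}(S^{-1}\Xb,S^{-1}\Yb)$; the paper then concludes directly that endomorphism rings are Artinian (hence local for indecomposables) and obtains finite decomposability from the additive length function $h(\Xb)=\sum_P\sum_n \ell_{A_P}(H^n(\Xb)_P)$, bypassing the heart and the semi-perfect formalism entirely. Your proof becomes correct once you insert this localization step; without it, part (b) does not go through.
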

As an application of Theorem \ref{krs}
we show
\begin{theorem}\label{decomp}For $i = 0, \ldots, d$;
we have an equivalence of categories
$$ \C_i(A) \cong \bigoplus_{\stackrel{P}{\dim A/P = i}} \D_0^b(A_P). $$
\end{theorem}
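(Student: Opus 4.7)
The plan is to construct the equivalence through localization. For each prime $P$ with $\dim A/P = i$, the localization functor $\Xb \mapsto \Xb_P$ is an exact functor of triangulated categories $\D^b(A) \to \D^b(A_P)$. I would first check that this restricts to $L_P\colon \D^b_i(A) \to \D^b_0(A_P)$ and kills $\D^b_{i-1}(A)$. The key input is a chain-of-primes inequality: if $Q \subsetneq P$ strictly, then prepending $Q$ to any saturated chain starting at $P$ gives $\dim A/Q \geq \dim A/P + 1$. Consequently, for any $A$-module $M$ with $\dim M \leq i$ and any $Q \in \Supp M$ contained in $P$, we must have $Q = P$, whence $\dim_{A_P} M_P = 0$. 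The same inequality shows $P \notin \Supp H^n(\Xb)$ for $\Xb \in \D^b_{i-1}(A)$, so $L_P$ descends through the Verdier quotient to $\ov{L_P}\colon \C_i(A) \to \D^b_0(A_P)$.

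For any fixed $\Xb \in \D^b_i(A)$, only finitely many primes $P$ of dimension $\dim A/P = i$ can lie in $\bigcup_n \Supp H^n(\Xb)$, because a closed subset of $\Spec A$ of dimension $\leq i$ has only finitely many top-dimensional components. The $\ov{L_P}$ therefore assemble into
$$L \;=\; \bigoplus_{P,\,\dim A/P = i} \ov{L_P} \colon \C_i(A) \longrightarrow \bigoplus_{P,\,\dim A/P = i} \D^b_0(A_P).$$
Conversely, a bounded complex $\Yb$ of finite-length $A_P$-modules is a bounded complex of finitely generated $A$-modules of dimension $\leq i$ (cohomologies are annihilated by powers of $P$), yielding $R_P\colon \D^b_0(A_P) \to \C_i(A)$, and these assemble to $R$. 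Distinct primes of dimension $i$ are incomparable, so for $P' \neq P$ of dimension $i$ we may pick $s \in P \setminus P'$: then $s$ becomes a unit in $A_{P'}$ while a suitable power of $s$ annihilates every object in the image of $R_P$, so $L_{P'} \circ R_P = 0$. Combined with the tautology $L_P \circ R_P \cong \operatorname{id}$, this yields $L \circ R \cong \operatorname{id}$.

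The remaining task is to show $R \circ L \cong \operatorname{id}_{\C_i(A)}$, equivalently that the canonical morphism $\eta_{\Xb}\colon \Xb \to \bigoplus_P \Xb_P$ has cone in $\D^b_{i-1}(A)$. For a single module $M \in \M_i(A)$ the primes $P \in \Supp M$ with $\dim A/P = i$ are (by the chain-length argument) the minimal primes of $\Supp M$ of maximal dimension; if these are $P_1,\ldots,P_r$, then both the kernel and cokernel of $M \to \bigoplus_j M_{P_j}$ have support in $\Supp M \setminus \{P_1,\ldots,P_r\}$ and therefore dimension at most $i-1$. To lift this to arbitrary complexes I would induct on the cohomological amplitude of $\Xb$ relative to the bounded $t$-structure of Theorem \ref{t-struc}, using exactness of localization to make $L_P$ and $R_P$ both $t$-exact and applying the five-lemma for triangulated categories to the morphism of truncation triangles. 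The main obstacle I anticipate is precisely this compatibility check, namely verifying that the $t$-structure and the localization functors interact cleanly with the Verdier quotient defining $\C_i(A)$; once this is in place, Theorem \ref{krs} supplies an independent sanity check, since the Krull-Remak-Schmidt property on both sides reduces the equivalence to matching indecomposable objects, which the heart-level computation already settles.
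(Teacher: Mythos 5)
Your overall architecture (localize at each dimension-$i$ prime, assemble into a functor to the direct sum, decompose blockwise) matches the paper's in spirit, but there is a genuine gap at the center of your argument: the functor $R_P \colon \D^b_0(A_P) \to \C_i(A)$ you propose does not exist as described. You assert that a bounded complex of finite-length $A_P$-modules ``is'' a bounded complex of finitely generated $A$-modules of dimension $\leq i$. That is false unless $P$ is maximal: already the residue field $\kappa(P) = A_P/PA_P$ is the fraction field of $A/P$, which is not a finitely generated $A$-module when $\dim A/P = i > 0$. So restriction of scalars does not land in $\D^b(A)$, and your quasi-inverse $R$, together with the subsequent claims $L\circ R \cong \operatorname{id}$ and $R \circ L \cong \operatorname{id}$ (the latter via a ``canonical morphism'' $\Xb \to \bigoplus_P \Xb_P$, which likewise does not live in $\D^b(A)$), collapses. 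Essential surjectivity of $L_P$ is the real content here, and the paper proves it by a d\'evissage: induct on $\ell(H^*(\Yb))$ for $\Yb \in \D^b_0(A_P)$, realizing $\kappa(P)[c]$ as the localization of the stalk complex $A/P[c]$ in the base case, and building the general $\Yb$ as the cone of a morphism between objects already known to be in the image, using fullness to lift that morphism.

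The second omission is that you never establish that $L$ is fully faithful; in your plan this was supposed to follow formally from the existence of $R$. In a Verdier quotient, morphisms are left fractions, and identifying $\Hom_{\C_i}(\Xb,\Yb)$ with $\Hom_{\D^b(S^{-1}A)}(S^{-1}\Xb, S^{-1}\Yb)$ (where $S$ is the complement of the union of the relevant dimension-$i$ primes) is exactly the paper's Proposition \ref{S}, which in turn rests on the facts that localization commutes with $\Hom$ in $\D^b$ for bounded complexes of finitely generated modules and that multiplication by $s \in S$ becomes invertible in $\C_i$ because its cone lies in $\D_{i-1}$. You would need to supply this computation. Finally, note that the blockwise decomposition of $\C_i(A)$ into the subcategories supported at a single $P$ is where Theorem \ref{krs} is actually used (indecomposables have singleton support and there are no nonzero morphisms between blocks); it is not merely a sanity check but the mechanism that makes the target a direct sum rather than a product.
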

As an application of this result we show:
\begin{corollary}\label{ar}
If $A$ is regular then $\C_i(A)$ has Auslander-Rieten triangles for all $i$.
\end{corollary}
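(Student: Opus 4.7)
The plan is to use Theorem~\ref{decomp} to reduce to a local question and then produce a Serre functor on each local piece, from which the existence of Auslander-Reiten triangles follows by the standard formalism.

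By Theorem~\ref{decomp}, $\C_i(A) \cong \bigoplus_{\dim A/P = i} \D_0^b(A_P)$ as triangulated categories. In such a direct-sum decomposition every indecomposable object lies in exactly one summand, and any AR-triangle ending at it remains within that summand; hence it suffices to show $\D_0^b(A_P)$ has AR-triangles for every prime $P$ with $\dim A/P = i$. Since $A$ is regular, each $A_P$ is a regular local ring; fix such a prime and set $R = A_P$, $n = \dim R$.

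The next step is to equip $\D_0^b(R)$ with a Serre functor, which I expect to be the $n$-fold shift $S = [n]$. Write $E$ for the injective hull of the residue field of $R$ and $D = \Hom_R(-, E)$ for Matlis duality. Since $R$ is regular of dimension $n$, the complex $R[n]$ is a normalized dualizing complex, so Grothendieck local duality yields a natural isomorphism $R\Hom_R(\Xb, R[n]) \cong D(\Xb)$ for $\Xb \in \D_0^b(R)$, using that every such $\Xb$ is $\m$-torsion. Combined with the fact that every object of $\D_0^b(R)$ is perfect (because $R$ is regular), this gives $R\Hom_R(\Xb, \Yb) \cong D(\Xb) \otimes_R^L \Yb[-n]$; a symmetric computation together with $D^2 \cong \mathrm{id}$ on complexes of finite-length cohomology then produces the Serre duality isomorphism $\Hom_R(\Xb, \Yb) \cong D\Hom_R(\Yb, \Xb[n])$, natural in both variables.

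Finally, Theorem~\ref{krs} implies each $\D_0^b(A_P)$ is Krull-Remak-Schmidt, and the standard Reiten-Van den Bergh argument produces AR-triangles from a Serre functor. The main obstacle is that Reiten-Van den Bergh is typically phrased for Hom-finite $k$-linear categories over a field $k$, whereas our Hom spaces are merely finite-length $R$-modules. I would address this by systematically replacing ordinary vector-space duality with Matlis duality $D$ throughout the construction of almost-split triangles; the formal properties of $D$ (exactness and involutivity on finite-length modules) suffice to carry out the standard argument.
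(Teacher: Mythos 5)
Your reduction is exactly the paper's: since $A$ regular implies $A_P$ regular, Theorem \ref{decomp} reduces everything to showing that $\D^b_0(A_P)$ has AR-triangles for a regular local ring $A_P$ (and AR-triangles are computed summand-by-summand in a direct sum of triangulated categories). The difference is in how that local statement is handled. The paper simply cites \cite[1.1]{P2} for it, whereas you sketch a proof: every object of $\D^b_0(R)$ is perfect because $R$ is regular, $R[n]$ is a normalized dualizing complex, and local duality plus tensor-Hom adjunction and Matlis biduality on finite-length cohomology give $\Hom_{\D}(\Xb,\Yb)\cong D\Hom_{\D}(\Yb,\Xb[n])$, i.e.\ $[n]$ is a Serre functor relative to Matlis duality; the Reiten--Van den Bergh construction, with vector-space duality replaced throughout by $D$, then yields AR-triangles. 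Your duality computation is correct, and you rightly flag the only real issue, namely that Reiten--Van den Bergh is usually stated for Hom-finite $k$-linear categories; the adaptation to Hom-sets that are finite-length modules with Matlis duality is standard (and is essentially the content of the cited reference), though a fully self-contained write-up would have to redo that construction, including checking naturality of the Serre isomorphism in both variables and the local-endomorphism-ring argument that produces the almost split map. In short: same route, with the paper's black box opened; what you gain is self-containedness, at the cost of reproving a known result.
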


Here is an overview of the contents of this paper. In section two we prove Theorem \ref{t-struc}. In section three we prove Theorem \ref{heart} In the next section we prove Theorem \ref{krs}.
Finally in section five we prove Theorem \ref{decomp} and Corollary \ref{ar}.
\section{$t$-structure}
Let $\G$ be a triangulated category with its shift functor denoted by $[1]$.

\s Recall
that a $t$-structure on $\G$ is a pair of strictly (i.e. closed under isomorphisms) full additive subcategories
$(\G^{\leq 0}, \G^{\geq 0})$ satisfying the following conditions:\\
(T1) $\Hom_\G(X, Y[-1]) = 0 $ for all $X \in \G^{\leq 0}$ and $Y \in \G^{\geq 0}$.\\
(T2) $\G^{\leq 0}$ is closed under the functor $[1]$, and
 $\G^{\geq 0}$ is closed under the functor $[-1]$. \\
(T3) for each $X \in \G$, there is an exact triangle
$ A \rt X \rt B[-1]\rt A[1]$ with $A \in \G^{\leq 0}$ and
$B \in \G^{\geq 0}$.

For $n \in \Z$,  set $\G^{\leq n} = \G^{\leq 0}[-n] $ and $\G^{\geq n} = \G^{\geq 0}[-n] $. A $t$-structure  if for each $X \in \G$ there exists $m, n $ with $m \leq n$ such that $X \in \G^{\leq n} \cap \G^{\geq m}$.

\s Let $\Ac$ be an abelian category. We index $\Ac$-complexes cohomologically, i.e.,
\[
\Xb \colon     \cdots \rt X^n \rt X^{n+1} \rt X^{n+2} \rt \cdots
\]
 let $\D = \D^b(\Ac)$ be its bounded derived category. The standard  bounded $t$-structure on $\D$ is defined as follows:
\[
\D^{\leq 0}  = \{ \Xb \mid \Xb \cong \Yb \ \text{where} \ Y^n = 0 \ \text{for} \ n > 0 \} \ \text{and}
\]
\[
\D^{\geq 0}  = \{ \Xb \mid \Xb \cong \Yb \ \text{where} \ Y^n = 0 \ \text{for} \ n < 0 \}.
\]

\s Let $A$ be a Noetherian ring  of dimension $d$ and let $\D = \D^b(A)$ be its bounded derived category.  Let $\D_i = \D_i^b(A)$ denote the thick subcategory of $\D^b(A)$ consisting of complexes $\Xb$ with $\dim H^n(\Xb) \leq i$ for all $n$. Set $\D_{-1}^b(A) = 0$. Then note that for $i = 0, \ldots, d$ the following defines a bounded $t$-structure on $\D_i$;
\[
\D_i^{\leq 0}  = \{ \Xb  \in \D_i \mid \Xb \cong \Yb \ \text{where} \ Y^n = 0 \ \text{for} \ n > 0 \} \ \text{and}
\]
\[
\D_i^{\geq 0}  = \{ \Xb \in \D_i \mid \Xb \cong \Yb \ \text{where} \ Y^n = 0 \ \text{for} \ n < 0 \}.
\]

\s Consider $\C_i = \C_i(A) = \D_i/\D_{i-1}$. Set
\[
\C_i^{\leq 0}  = \{ \Xb  \in \C_i \mid \Xb \cong \Yb \ \text{where} \ Y^n = 0 \ \text{for} \ n > 0 \} \ \text{and}
\]
\[
\C_i^{\geq 0}  = \{ \Xb \in \C_i \mid \Xb \cong \Yb \ \text{where} \ Y^n = 0 \ \text{for} \ n < 0 \}.
\]
We show
\begin{theorem}\label{t-i}
  $( \C_i^{\leq 0}, \C_i^{\geq 0} )$ defines a bounded $t$-structure on $\C_i$.
\end{theorem}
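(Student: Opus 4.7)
The plan is to show that the standard bounded $t$-structure on $\D_i$ restricts to the thick subcategory $\D_{i-1}$, and then descends along the Verdier quotient $\D_i \rt \C_i$. The restriction property is immediate: for any $\Xb \in \D_{i-1}$, both $\tau^{\leq 0}\Xb$ and $\tau^{\geq 1}\Xb$ again lie in $\D_{i-1}$, since each cohomology of a truncation is either zero or already a cohomology of $\Xb$, hence has dimension $\leq i-1$.

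Granting this, axioms (T2) and (T3) become essentially formal. Closure under the shift functors is built into the definitions of $\C_i^{\leq 0}$ and $\C_i^{\geq 0}$. For (T3), given $\Xb \in \C_i$, lift to a representative in $\D_i$ and apply the localization functor to the standard truncation triangle $\tau^{\leq 0}\Xb \rt \Xb \rt \tau^{\geq 1}\Xb \rt \tau^{\leq 0}\Xb[1]$ to produce the required triangle in $\C_i$. Boundedness of the induced $t$-structure follows at once because every object of $\C_i$ lifts to a complex with bounded cohomology in $\D_i$.

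The substantive step is (T1). Given $\Xb \in \C_i^{\leq 0}$ and $\Yb \in \C_i^{\geq 0}$, choose representatives with $\Xb \in \D_i^{\leq 0}$ and $\Yb \in \D_i^{\geq 0}$. A morphism $\Xb \rt \Yb[-1]$ in $\C_i$ is a left fraction $\Xb \xleftarrow{s} \Ub \xrightarrow{f} \Yb[-1]$ with $\text{cone}(s) \in \D_{i-1}$. The key move is to replace $\Ub$ by its truncation $\tau^{\leq 0}\Ub$. For this to yield an equivalent roof, I need $\tau^{\leq 0}\Ub \rt \Xb$ to still have cone in $\D_{i-1}$. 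An octahedron shows that this cone fits in a triangle between $\tau^{\geq 1}\Ub$ and $\text{cone}(s)$, so it suffices to prove $\tau^{\geq 1}\Ub \in \D_{i-1}$. This follows from the long exact cohomology sequence of $\Ub \rt \Xb \rt \text{cone}(s) \rt$: for $n \geq 1$, since $H^n(\Xb) = 0$, the module $H^n(\Ub)$ is a quotient of $H^{n-1}(\text{cone}(s))$ and hence has dimension $\leq i-1$. Once $\Ub$ is replaced by $\tau^{\leq 0}\Ub \in \D_i^{\leq 0}$, the morphism $\tau^{\leq 0}\Ub \rt \Yb[-1]$ vanishes already in $\D_i$ by axiom (T1) of the standard $t$-structure, so the roof represents zero in $\C_i$. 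The main obstacle is exactly this cohomological dimension estimate; it forces an argument with honest complexes rather than a purely formal one at the level of triangulated categories.
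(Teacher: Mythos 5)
Your proposal is correct and follows essentially the same route as the paper: (T2) and (T3) are formal, and for (T1) you truncate the apex of the roof to lie in $\D_i^{\leq 0}$, justified by the long exact cohomology sequence showing $H^n$ of the apex has dimension $\leq i-1$ for $n>0$. The only cosmetic difference is at the last step, where you invoke axiom (T1) of the standard $t$-structure on $\D_i$ directly, while the paper re-derives that vanishing by passing to a projective resolution concentrated in degrees $\leq 0$ and computing Hom in the homotopy category.
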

\begin{proof}
  Axiom (T2) is clearly satisfied by $( \C_i^{\leq 0}, \C_i^{\geq 0} )$.

  Axiom (T3). Let $\Zb \in \C_i$. We can consider $\Zb$ as an element in $\D_i$. Using the bounded $t$-structure on $\D_i$ there exists a triangle in $\D_i$
  where
  $ \Xb \rt \Zb \rt \Yb[-1] \rt \Xb[1]$ where $\Xb \in \D_i^{\leq 0}$ and $\Yb \in \D_i^{\geq 0}$.
  We consider the image of this exact triangle in $\C_i$. We note the image of $\Xb$ is in $\C_i^{\leq 0}$ and the image of $\Yb$ is in $\C_i^{\geq 0}$. Thus the pair $( \C_i^{\leq 0}, \C_i^{\geq 0} )$ satisfies (T3).

(T1)  We show that $\Hom_{\C_i}(\Xb, \Yb[-1]) = 0$ where $\Xb$ is in $\C_i^{\leq 0}$ and the image of $\Yb$ is in $\C_i^{\geq 0}$.
We may assume that $X_i = 0$ for $i > 0$ and $Y_i = 0$ for $i < 0$.

  A morphism $\phi \colon \Xb \rt \Yb[-1] $ in $\C_i$ can be written as a left fraction
\[
\xymatrix{
\
&\Wb
\ar@{->}[dl]_{u}
\ar@{->}[dr]^{f}
 \\
\Xb
\ar@{->}[rr]_{\phi = f \circ u^{-1}}
&\
&\Yb[-1]
}
\]
where $u, f$ are  maps in $\D_i$
and cone $u$ is in $\D_{i-1}$. It follows that $H^n(\Wb)$ has dimension $\leq i -1$ for $n > 0$. Consider the truncation of $\Wb$
\[
\Wb^\prime \colon \cdots W^{-1} \rt W^0 \rt \image \partial_0 \rt 0
\]
We have an obvious map $\theta \colon \Wb^\prime \rt \Wb$ in $\D_i$ with cone of $w$ in $D_{i-1}$. So we have
$$ \phi = f \circ u^{-1} = f \circ \theta \circ (u \circ \theta)^{-1}. $$
We have the following left fraction where $v = u \circ \theta$ and $g = f\circ \theta$ are maps in $\D_i$,
\[
\xymatrix{
\
&\Wb^\prime
\ar@{->}[dl]_{v}
\ar@{->}[dr]^{g}
 \\
\Xb
\ar@{->}[rr]_{\phi = g \circ v^{-1}}
&\
&\Yb[-1]
}
\]
Let $\xi \colon \Pb \rt \Wb^\prime$ be a projective resolution. As $H^n(\Wb^\prime) = 0$ for $n > 0$ we may assume that $P^n = 0$ for $n > 0$.
So we have
$$ \phi = g \circ v^{-1} = g \circ \xi \circ (v \circ \xi)^{-1}. $$
We have the following left fraction where $w = v \circ \xi$ and $h = g \circ \xi$ are maps in $\D_i$,
\[
\xymatrix{
\
&\Pb
\ar@{->}[dl]_{w}
\ar@{->}[dr]^{h}
 \\
\Xb
\ar@{->}[rr]_{\phi = h \circ w^{-1}}
&\
&\Yb[-1]
}
\]
Let $\K$ be the homotopy category of complexes $\Zb$ of finitely generated $A$-modules such that $H^i(\Zb) = 0$ for $i \gg 0$. Then note that
$$\Hom_{\D}(\Pb , \Yb[-1])  = \Hom_\K(\Pb, \Yb[-1]).$$
But $P^n = 0$ for $n > 0$ and $\Yb[-1]^n = 0$ for $n \leq 0$. It follows that \\ $\Hom_\K(\Pb, \Yb[-1]) = 0$. Thus $\phi = 0$.

Finally it is clear that the $t$-structure defined on $\C_i$ is bounded.
\end{proof}

An immediate corollary is
\begin{corollary}\label{split-idemptents} Idempotents split in $\C_i$.
\end{corollary}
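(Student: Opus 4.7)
The plan is to deduce idempotent splitting directly from the bounded $t$-structure established in Theorem~\ref{t-i}. First I would recall the general principle that the heart $\Hc_i := \C_i^{\leq 0} \cap \C_i^{\geq 0}$ of any $t$-structure is an abelian category. Every idempotent in an abelian category splits, since its kernel exists and furnishes a direct summand; hence idempotents split in $\Hc_i$.

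The next step is to propagate this splitting from $\Hc_i$ to all of $\C_i$. Given an idempotent $e \colon \Xb \rt \Xb$, boundedness of the $t$-structure lets me express $\Xb$ as a finite iterated extension of its cohomology objects $H^n(\Xb) \in \Hc_i$ via truncation triangles
\[
\tau^{\leq n}\Xb \rt \Xb \rt \tau^{\geq n+1}\Xb \rt \tau^{\leq n}\Xb[1].
\]
Applying $t$-cohomology to $e$ produces an idempotent on each $H^n(\Xb)$, each of which splits in $\Hc_i$. I would then induct on the cohomological amplitude of $\Xb$: the base case is $\Xb \in \Hc_i$, already handled, and in the inductive step the splittings of $e$ on $\tau^{\leq n}\Xb$ and $\tau^{\geq n+1}\Xb$ are combined via the octahedral axiom into a splitting on $\Xb$.

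The main obstacle is precisely this inductive step, because the summands produced by the inductive hypothesis on the two outer terms must be arranged to be compatible with the connecting morphism $\tau^{\geq n+1}\Xb \rt \tau^{\leq n}\Xb[1]$. This compatibility, together with the resulting octahedral construction of the summand of $\Xb$, is the content of a standard result (essentially Balmer--Schlichting's theorem on idempotent completions of triangulated categories) asserting that a triangulated category with a bounded $t$-structure is Karoubian whenever its heart is. I would either invoke this result as a black box or reproduce its short argument in the present setting to conclude that every idempotent in $\C_i$ splits.
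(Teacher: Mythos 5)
Your argument is essentially the paper's: the paper proves the corollary by citing Le--Chen, \emph{Karoubianness of a triangulated category} (reference [LC]), whose theorem states exactly that a triangulated category with a bounded $t$-structure is Karoubian, which is the "black box" you describe (you attribute it to Balmer--Schlichting, but the relevant result is Le--Chen's; Balmer--Schlichting concerns the triangulated structure on idempotent completions). Your sketch of the truncation/induction argument is a reasonable outline of that theorem's proof, so the proposal is correct and follows the same route.
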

\begin{proof}
  The result follows from \cite[Theorem]{LC} as $\C_i$ has a bounded $t$-structure.
\end{proof}
\section{Identifying the heart of $\C_i(A)$}
In this section we identify the heart  $\Hc_i(A)$ of $\C_i = \C_i(A)$ with respect to the $t$-structure defined in the previous section.
Let $\mmod(A)$ denote the category of all finitely generated $A$-modules. Let $\M_i(A)$ denote the category of all finitely generated $A$-modules of dimension $\leq i$. We note that $\M_i(A)$ is a Serre-subcategory of $\mmod(A)$.

\s \label{cf} We first construct an exact functor $\Psi \colon \M_i/\M_{i-1} \rt \Hc_i$.
We note that $\Hc_i = \C_i^{\leq 0} \cap \C_i^{\geq 0}$. Let $M \in \M_i$ then we consider it as a stalk complex $i(M)$ in $\D_i$. Let $j(M)$ be the image of $i(M)$ in $\C_i$. Then note
$j(M) \in \Hc_i$.  Thus we have an additive functor $\Phi \colon \M_r \rt \Hc_i$. Furthermore if $M \in \M_{i-1}$ then $j(M) = 0$ in $\C_i$.
Let $0 \rt M_1 \rt M_2 \rt M_3 \rt 0$ be an exact sequence in $\M_i$. Then note we have an exact triangle $i(M_1) \rt i(M_2) \rt i(M_3) \rt i(M_1)[1]$ in $\D_i$. It's image in
$\C_i$ is $j(M_1) \rt j(M_2) \rt j(M_3) \rt j(M_1)[1]$. As all the complexes is in $\Hc_i$ it follows that we have an exact sequence
$0 \rt j(M_1) \rt j(M_2) \rt j(M_3) \rt 0$ in $\Hc_i$. Thus $\Phi$ is an exact functor. As $j(M) = 0$ when $M \in \M_{i-1}$ we get that $\Phi$ factors as an exact functor
$\Phi \colon \M_i/\M_{i-1} \rt \Hc_i$.

We prove:
\begin{theorem}
(with hypotheses as in \ref{cf}) The functor $\Psi$ is an equivalence of categories.
\end{theorem}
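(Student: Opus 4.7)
The plan is to show $\Psi$ is essentially surjective and fully faithful. The central technical input, which I establish first, is the characterization
\[
\Hc_i = \{\Xb \in \D_i : H^n(\Xb) \in \M_{i-1} \ \text{for all} \ n \neq 0 \}.
\]
The inclusion $\supseteq$ follows from the truncation zigzag $\Xb \leftarrow \tau^{\leq 0}\Xb \rt H^0(\Xb)[0]$ whose cones $\tau^{\geq 1}\Xb$ and $\tau^{\leq -1}\Xb[1]$ lie in $\D_{i-1}$ by hypothesis, hence become isomorphisms in $\C_i$, placing $\Xb \cong i(H^0(\Xb))$ in the heart. For the reverse inclusion, $\Xb \in \Hc_i$ means $\Xb \cong \Yb$ in $\C_i$ for some $\Yb$ with $Y^n = 0$ for $n > 0$, and similarly $\Xb \cong \Yb'$ for some $\Yb'$ concentrated in $n \geq 0$; representing these isomorphisms by $\D_i$-roofs whose cones are in $\D_{i-1}$ and running the long exact sequence forces $H^n(\Xb) \in \M_{i-1}$ for all $n \neq 0$. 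Essential surjectivity then follows at once: any $\Xb \in \Hc_i$ satisfies $\Xb \cong \Psi(H^0(\Xb))$ with $H^0(\Xb) \in \M_i$.

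For fullness, a morphism $\phi \colon i(M) \rt i(N)$ in $\C_i$ is given by a left fraction $i(M) \xleftarrow{s} \Wb \xrightarrow{f} i(N)$ with cone $s$ in $\D_{i-1}$. The long exact sequence forces $H^n(\Wb) \in \M_{i-1}$ for $n \neq 0$ and makes $H^0(s) \colon H^0(\Wb) \rt M$ an isomorphism in $\M_i/\M_{i-1}$. Applying the above characterization to $\Wb$ gives a zigzag of isomorphisms in $\C_i$ identifying $\phi$ with the image under $\Psi$ of the $\M_i$-roof $M \xleftarrow{H^0(s)} H^0(\Wb) \xrightarrow{H^0(f)} N$; this represents $\alpha = H^0(f) \circ H^0(s)^{-1}$ in $\M_i/\M_{i-1}$, and $\phi = \Psi(\alpha)$. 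For faithfulness, let $\alpha \in \Hom_{\M_i/\M_{i-1}}(M, N)$ be represented by an $\M_i$-roof $M \xleftarrow{t} M_0 \xrightarrow{g} N/N_0$ with $\mathrm{coker}(t), N_0 \in \M_{i-1}$. If $\Psi(\alpha) = 0$ then $i(g) = 0$ in $\C_i$, so the Verdier calculus of fractions supplies $u \colon \Wb \rt i(M_0)$ with cone $u$ in $\D_{i-1}$ and $i(g) \circ u = 0$ already in $\D_i$. Taking $H^0$ and setting $M_1 = \image(H^0(u)) \subseteq M_0$, one has $M_0/M_1 \in \M_{i-1}$ and $g|_{M_1} = 0$, witnessing $\alpha = 0$ in $\M_i/\M_{i-1}$.

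The main obstacle is the careful bookkeeping required to translate between the Verdier calculus of fractions for $\C_i = \D_i/\D_{i-1}$ and the Gabriel calculus for $\M_i/\M_{i-1}$, in particular verifying that equivalent roofs correspond to equivalent roofs on the two sides and that composition is respected. A cleaner organization is to introduce an inverse functor $\Phi \colon \Hc_i \rt \M_i/\M_{i-1}$ sending $\Xb$ to $H^0(\Xb)$ on objects and defined on morphisms by the $H^0$-of-a-roof procedure above; one then checks $\Phi \circ \Psi = \mathrm{id}$ on the nose and $\Psi \circ \Phi \cong \mathrm{id}$ via the essential surjectivity argument.
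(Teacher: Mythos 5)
Your proof is correct, and while the overall strategy (dense, full, faithful) matches the paper's, your technical route is genuinely different and arguably cleaner. Where the paper repeatedly passes to projective resolutions --- for density it runs $\Wb \cong \Wb' \cong \Pb \cong P^0/\image\,\partial^{-1}$, and for fullness it replaces the roof's apex $L$ by a resolution $\Pb$ and factors through a resolution of $H^0(\Pb)$ --- you use the standard truncation triangles, obtaining the characterization $\Hc_i = \{\Xb : H^n(\Xb) \in \M_{i-1} \text{ for } n \neq 0\}$ and the isomorphism $\Xb \cong i(H^0(\Xb))$ in $\C_i$ directly from the zigzag $\Xb \leftarrow \tau^{\leq 0}\Xb \rt H^0(\Xb)[0]$. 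This avoids the (possibly unbounded-below) resolutions and the auxiliary homotopy category the paper needs, and it makes the identification of the heart explicit rather than implicit. For faithfulness you use the calculus-of-fractions criterion ($i(g)\circ u = 0$ in $\D_i$ for some $u$ with cone in $\D_{i-1}$) where the paper invokes Neeman's factorization criterion (the map factors through an object of $\D_{i-1}$); these are equivalent and both conclude by taking $H^0$. The one point you correctly flag but leave compressed is the compatibility in the fullness step: that the composite $\tau^{\leq 0}\Wb \rt \Wb \xrightarrow{s} i(M)$ agrees with $i(H^0(s))$ precomposed with $\tau^{\leq 0}\Wb \rt i(H^0(\Wb))$. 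This follows from the adjunction $\Hom_{\D}(\tau^{\leq 0}\Wb, i(M)) \cong \Hom_A(H^0(\Wb), M)$ and should be stated, but it is no worse a gap than the paper's own ``taking cohomology we get the required result.''
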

 \begin{proof}
 We have to show $\Psi$ is fully faithful and dense.

 We first show $\Psi$ is dense. Let $\Wb \in \Hc_i$. Then note $\dim H^n(\Wb) \leq i - 1$ for $n \neq 0$ and $\dim H^0(\Wb) \leq i$.
  Consider the truncation of $\Wb$
\[
\Wb^\prime \colon \cdots W^{-1} \rt W^0 \rt \image \partial_0 \rt 0
\]
We have an obvious map $\theta \colon \Wb^\prime \rt \Wb$ in $\D_i$ with cone of $w$ in $D_{i-1}$. So $\Wb \cong \Wb^\prime$ in $\C_i$.
Let $\Pb \rt \Wb^\prime$ be a projective resolution. As $H^n(\Wb^\prime) = 0$ for $n > 0$ we may assume $P^n = 0$ for $n  > 0$. Note
$\Wb \cong \Wb^\prime \cong \Pb$ in $\C_i$. Consider the obvious map
$\Pb \rt P^0/\image \partial^{-1} = U$.
Then considering $U$ as a stalk complex note $\Pb \rt U$ is an isomorphism in $\C_i$. So $\Wb \cong \Psi(U)$.

Next we show $\Psi$ is a full and faithful functor.\\
First we show that if $\theta \colon M \rt N$ a map in $\M_r$ is an isomorphism in $\M_r/\M_{r-1}$ then $\Phi(\theta) \colon j(M) \rt j(N)$ is an isomorphism in $\Hc_i$.
Using $\theta$ we get exact sequences $0 \rt K \rt M \xrightarrow{\alpha} I \rt 0$ and $0 \rt I \xrightarrow{\beta} N \rt C \rt 0$ where $K, C \in \M_{r-1}$ and $\theta = \beta \circ \alpha$.  It follows that we have a triangle
$$j(K) \rt j(M) \xrightarrow{\Psi(\alpha)} j(I) \rt j(K)[1] \ \text{ in $\C_i$.}$$  As $j(K) = 0$ in $\C_i$ we get $\Psi(\alpha) \colon j(M) \rt j(I)$ is an isomorphism. Similarly we get $ \Psi(\beta) \colon j(I) \rt j(N)$ is an isomorphism. It follows $\Psi(\theta) = \Psi(\beta)\circ \Psi(\alpha)$ is an isomorphism in $\C_i$ and so in $\Hc_i$.

Let $M, N \in \M_i$.
We want to show $$\Psi \colon \Hom_{\M_i/\M_{i-1}}(M, N) \rt \Hom_{\Hc_i}(j(M), j(N)) \quad \text{is an isomorphism.} $$
Let $\phi \colon M \rt N$ be a map in $\M_i/\M_{i-1}$. Then $\phi$ can be expressed as a left fraction
\[
\xymatrix{
\
&L
\ar@{->}[dl]_{w}
\ar@{->}[dr]^{h}
 \\
M
\ar@{->}[rr]_{\phi = h \circ w^{-1}}
&\
& N
}
\]
where $w$ is an isomorphism in $\M_i/\M_{i-1}$. Suppose $\Psi(\phi) = 0$. Then $\Psi(h) \circ \Psi(w)^{-1} = 0$. So $\Psi(h) = 0$. By \cite[2.1.26]{N},
$\Psi(h) \colon j(L) \rt j(N)$ factors through $\Xb \in \D_{i-1}$. Taking cohomology we get that $h \colon L \rt N$ factors through $H^0(\Xb) \in \M_{i-1}$. So $h = 0$ in $\M_i/\M_{i-1}$. Thus $\phi = 0$. So it follows that $\Psi$ is faithful.

Next we show $\Psi$ is full.  Let $\psi \colon j(M) \rt j(N)$ be a morphism in $\Hc_i$. Then $\psi$ can be represented by a left fraction
\[
\xymatrix{
\
&L
\ar@{->}[dl]_{w}
\ar@{->}[dr]^{h}
 \\
j(M)
\ar@{->}[rr]_{\psi = h \circ w^{-1}}
&\
& j(N)
}
\]
We note that as discussed before we may assume that $H^i(L) = 0$ for $i > 0$. We may replace $L$ by its projective resolution $\Pb$ and further we may assume $\Pb^n = 0$ for $n > 0$.
We note $w \colon P \rt j(M)$ factors through ${\Qb}_{H^0(\Pb)}$ where ${\Qb}_{H^0(\Pb)}$ is a projective resolution of $H^0(\Pb)$. A similar result holds for $h$. We further note that as $H^n(\Pb)$ has dimension $< i$ for $n < 0$ we get that the natural map $\Pb \rt {\Qb}_{H^0(P)}$ is an isomorphism in $\C_i$. Thus we get a left fraction description of $\psi$ as
\[
\xymatrix{
\
&{\Qb}_{H^0(\Pb)}
\ar@{->}[dl]_{u}
\ar@{->}[dr]^{v}
 \\
j(M)
\ar@{->}[rr]_{\psi = v \circ u^{-1}}
&\
& j(N)
}
\]
Taking cohomology we get the required result.
 \end{proof}

\section{The Krull-Remak-Schmidt property of $\C_i(A)$}
 In this section we prove Theorem \ref{krs}. By \ref{split-idemptents} idempotents split in $\C_i$. Next we show that any $\Xb \in \C_i$ is a direct sum of indecomposables in $\C_i$.
 \s Let $\Xb \in \D_i$. We set
 \[
 \Supp_i \Xb = \{ P \in \Spec(A) \mid \dim A/P = i \ \text{and} \ {\Xb}_P \neq 0 \ \text{in} \ D^b(A_P) \}.
 \]
 It is clear that $\Supp_i \Xb$ is a finite set and $\Supp_i \Xb = \emptyset$ if and $\Xb \in \D_{i-1}$. Set
 \[
 h(\Xb) = \sum_{P \in \Supp_i \Xb}\left( \sum_{n \in \Z}\ell_{A_P}(H^n(\Xb)_P)   \right).
 \]
 We note that $h(\Xb)$ is a finite non-negative integer and $h(\Xb) = 0$ if and $\Xb \in \D_{i-1}$.

 \s For $\Xb \in \D_i$ let $i(\Xb)$ denote its image in $\C_i$. We show that $h$ descends into a well-define function on $\C_i$.
 \begin{lemma}\label{direct}
   Let $\Xb, \Yb \in \D_i$ and let $\Zb \in \D_{i-1}$. Then the following holds
   \begin{enumerate}[\rm (1)]
     \item If $\Xb \rt \Yb \rt \Z_i \rt \Xb[1]$ is a triangle in $\D_i$ then $h(\Xb) = h(\Yb)$.
     \item If $i(\Xb) \cong i(\Yb)$ in $\C_i$ then $\Supp_i(\Xb) = \Supp_i(\Yb)$ and  $h(\Xb) = h(\Yb)$.
     \item  $\Supp_i(-)$ is well defined in $\C_i$. Furthermore $h(-)$ defines a well-defined function on $\C_i$ and $h(\Wb) = 0$ if and only if $\Wb =0$ in $\C_i$.
     \item $h(\Fb \oplus \Wb) = h(\Fb) + h(\Wb)$ in $\C_i$.
     \item Any $\Wb \in \C_i$ is a finite direct sum of indecomposables in $\C_i$.
   \end{enumerate}
 \end{lemma}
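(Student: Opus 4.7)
The main idea is that $h$ is a purely local invariant, computed at the finitely many primes $P$ of coheight $i$. For each such $P$, localization $(-)_P \colon \D_i \rt \D^b(A_P)$ is exact and kills $\D_{i-1}$: indeed, if $P \in \Supp H^n(\Zb)$ for some $\Zb \in \D_{i-1}$ then $\dim A/P \le \dim H^n(\Zb) \le i - 1$, contradicting $\dim A/P = i$. Hence $\Zb_P = 0$ in $\D^b(A_P)$ for every $\Zb \in \D_{i-1}$ and every $P$ with $\dim A/P = i$. This single observation drives all five parts.

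For part (1), I would localize the triangle $\Xb \rt \Yb \rt \Zb \rt \Xb[1]$ at each such $P$; since $\Zb_P = 0$, the localized triangle gives $H^n(\Xb)_P \cong H^n(\Yb)_P$ for all $n$, and summing $A_P$-lengths yields both $h(\Xb) = h(\Yb)$ and $\Supp_i(\Xb) = \Supp_i(\Yb)$. For part (2), I would use the Verdier-quotient fact that an isomorphism $i(\Xb) \cong i(\Yb)$ in $\C_i$ is represented by a roof $\Xb \xleftarrow{s} \Wb \xrightarrow{t} \Yb$ in which \emph{both} $s$ and $t$ have cone in $\D_{i-1}$: the leg $s$ has cone in $\D_{i-1}$ by the definition of a left fraction, and because $t \circ s^{-1}$ is invertible in $\C_i$ and $s^{-1}$ is already invertible there, $t$ itself becomes invertible in $\C_i$, forcing $\mathrm{cone}(t) \in \D_{i-1}$. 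Two applications of (1) to the triangles on $s$ and $t$ then give the claim.

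Parts (3)--(5) follow directly. For (3), $h$ descends to $\C_i$ by (2); and $h(\Wb) = 0$ forces $H^n(\Wb)_P = 0$ for every $n$ and every $P$ with $\dim A/P = i$, so no minimal prime of $\ann H^n(\Wb)$ has coheight $i$, whence $\dim H^n(\Wb) \le i - 1$ for all $n$, i.e. $\Wb \in \D_{i-1}$, i.e. $\Wb = 0$ in $\C_i$. Part (4) is immediate from the additivity of length. For (5), I would induct on $h(\Wb)$: if $\Wb$ is indecomposable in $\C_i$ there is nothing to do; otherwise Corollary \ref{split-idemptents} yields a nontrivial splitting $\Wb \cong \Fb \oplus \Wb'$ in $\C_i$ with both summands nonzero, in which case (4) and (3) give strict inequalities $0 < h(\Fb),\, h(\Wb') < h(\Wb)$ and the induction applies. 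The main obstacle is the Verdier-quotient fact invoked in (2)---that an isomorphism in $\D_i/\D_{i-1}$ admits a roof whose both legs have cone in $\D_{i-1}$; once that general fact is in hand, the rest is a routine exercise in localization and length arithmetic.
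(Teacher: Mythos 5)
Your proposal is correct and follows essentially the same route as the paper: localize at the finitely many coheight-$i$ primes to kill $\D_{i-1}$ for (1), represent the isomorphism by a roof whose both legs have cone in $\D_{i-1}$ and apply (1) twice for (2), and induct on the value of $h$ for (5) (the paper phrases this as a minimal counterexample, which is the same argument). The only cosmetic difference is that your appeal to Corollary \ref{split-idemptents} in (5) is unnecessary, since a decomposable object admits a nontrivial direct sum decomposition by definition.
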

 \begin{proof}
   (1) Let $P$ be a prime such that $\dim A/P = i$. We note that ${\Zb}_P = 0$ in $\D^b(A_P)$. It follows that $\Supp_i(\Xb) = \Supp_i(\Yb)$ and $h(\Xb) = h(\Yb)$.

   (2) Let $\phi \colon i(\Xb) \rt i(\Yb)$ be an isomorphism in $\C_i$. We have a left fraction
 \[
\xymatrix{
\
&\Wb
\ar@{->}[dl]_{w}
\ar@{->}[dr]^{h}
 \\
\Xb
\ar@{->}[rr]_{\phi = h \circ w^{-1}}
&\
& \Yb
}
\]
We have $cone(w) \in \D_{i-1}$. Also as $\phi $ is an isomorphism it follows that $cone(h) \in \D_{i-1}$. By (1) we have
 $\Supp_i(\Xb) =  \Supp_i(\Wb) =\Supp_i(\Yb)$. Furthermore
 $h(\Xb) = h(\Wb) = h(\Yb)$.

(3) If $\Wb \in \C_i$, define
$\Supp_i(\Wb) = \Supp_i(\Xb)$ and
$h(\Wb) = h(\Xb)$ where $\Xb \in  \D_i$ such that $i(\Xb) = \Wb$. By (2) these terms  are well defined. Furthermore $\Supp_i(\Wb)$ is empty if and only if $h(\Wb)$ is zero if and only if $\Wb = 0$ in $\C_i$.

(4) Choose $\Xb, \Yb \in \D_i$ such that $i(\Xb) = \Fb, i(\Yb)= \Wb$. We note that $i(\Xb \oplus \Yb) = \Fb\oplus \Wb$.
We have
\[
h(\Fb \oplus \Wb) = h(\Xb \oplus \Yb) = h(\Xb) + h(\Yb) = h(\Fb) + h(\Wb).
\]

(5) Suppose it is not possible to decompose some $\Wb$ into a finite direct sum of indecomposables. Choose $\Wb$ with this property such that $h(\Wb)$ is smallest.
Notice $\Wb$ is not indecomposable. Say $\Wb = \Xb \oplus \Yb$ with $\Xb, \Yb$ non-zero in $\C_i$. Then $h(\Wb) = h(\Xb) + h(\Yb)$ with $h(\Xb) \neq 0$ and $h(\Yb) \neq 0$. It follows that
$h(\Xb) < h(\Wb)$ and $h(\Yb) < h(\Wb)$. By our assumption $\Xb, \Yb$ are a finite direct sum of indecomposables. It follows that $\Wb$ is also a finite direct sum of indecomposables, a contradiction. The result follows.
 \end{proof}

 \s \label{S-set} Let $\Xb, \Yb \in \C_i$. Suppose $\Supp_i(\Xb), \Supp_i(\Yb) \subseteq \{ P_1, \ldots, P_m \} $ with \\ $\dim A/P_j = i$ for $j = 1, \ldots, m$.  Let $S = A \setminus \cup_{j = 1}^{m} P_j$. Suppose
 $\phi \colon \Xb \rt \Yb$ be given as a left fraction
  \[
\xymatrix{
\
&\Wb
\ar@{->}[dl]_{w}
\ar@{->}[dr]^{h}
 \\
\Xb
\ar@{->}[rr]_{\phi = h \circ w^{-1}}
&\
& \Yb
}
\]
Then we note that $\Supp_i(\Wb) = \Supp_i(\Xb)$ and $cone(w)$ is in $D_{i-1}$. Thus we get that $S^{-1}w \colon S^{-1}\Wb \rt S^{-1}\Xb$ is an isomorphism.
So we have a well-defined $A$-linear map
\[
\eta_S(\Xb, \Yb) \colon \Hom_{\C_i}(\Xb, \Yb) \rt \Hom_{\D^b(S^{-1}A)}(S^{-1}\Xb, S^{-1}\Yb).
\]
given by $\phi \mapsto S^{-1}h \circ (S^{-1}w)^{-1}$.
We show
\begin{proposition}
\label{S}( with hypotheses as in \ref{S-set}) $\Hom_{\C_i}(\Xb, \Yb))$ is an $S^{-1}A$-module and $\eta_S(\Xb, \Yb)$ is an isomorphism of $S^{-1}A$-modules.
\end{proposition}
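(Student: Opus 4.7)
The plan is to first upgrade $\Hom_{\C_i}(\Xb,\Yb)$ to an $S^{-1}A$-module by showing that every $s \in S$ acts invertibly on every object of $\C_i$ whose $i$-support lies in $\{P_1,\ldots,P_m\}$, and then prove bijectivity of $\eta_S$ by means of the standard flat-base-change identity for $\Hom$ in the bounded derived category. For the $S^{-1}A$-module structure I fix $s\in S$ and $\Xb \in \D_i$ with $\Supp_i(\Xb)\subseteq\{P_1,\ldots,P_m\}$, and form the cone $C$ of multiplication by $s$ on $\Xb$. The long exact cohomology sequence produces short exact sequences
\[
0 \rt H^n(\Xb)/sH^n(\Xb) \rt H^n(C) \rt (0:_{H^{n+1}(\Xb)} s) \rt 0.
\]
For any prime $P$ with $\dim A/P = i$, either $\Xb_P = 0$ (so the outer terms vanish at $P$) or $P \in \{P_1,\ldots,P_m\}$, whence $s\notin P$, so $s$ is a unit in $A_P$ and the outer terms again vanish. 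Since $C \in \D_i$ gives $\dim H^n(C) \leq i$ and no height-$i$ prime lies in $\Supp H^n(C)$, we conclude $\dim H^n(C) \leq i-1$, i.e.\ $C \in \D_{i-1}$, so multiplication by $s$ is an isomorphism in $\C_i$. Consequently $\Hom_{\C_i}(\Xb,\Yb)$ is an $S^{-1}A$-module and $\eta_S$ is automatically $S^{-1}A$-linear.

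For bijectivity I would invoke the identity
\[
S^{-1}\Hom_{\D^b(A)}(\Wb,\Yb) \;\cong\; \Hom_{\D^b(S^{-1}A)}(S^{-1}\Wb,S^{-1}\Yb),
\]
valid for bounded complexes of finitely generated $A$-modules; this is proved by resolving $\Wb$ as a bounded-above complex of finitely generated projectives and using exactness of $S^{-1}$ together with compatibility of $\Hom_A(\Pb,-)$ with localization. For injectivity, represent a morphism $\phi$ with $\eta_S(\phi)=0$ as a left fraction $\phi=h\circ w^{-1}$ with $h\colon \Wb\rt\Yb$; then $S^{-1}h=0$, so the identity yields $s\in S$ with $sh=0$ in $\D^b(A)$, hence $s\phi=0$ in $\C_i$, and invertibility of $s$ on the $\Hom$ group forces $\phi=0$. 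For surjectivity, given $\psi\in\Hom_{\D^b(S^{-1}A)}(S^{-1}\Xb,S^{-1}\Yb)$, the identity writes $\psi=S^{-1}h/s$ for some $h\in\Hom_{\D^b(A)}(\Xb,\Yb)$ and $s\in S$; then $\phi:=s^{-1}\cdot i(h)$, formed using the $S^{-1}A$-action established above, satisfies $\eta_S(\phi)=\psi$.

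The main obstacle is the flat-base-change identity: at the module level it is a routine consequence of finite generation, but in the derived setting one must verify that $\Wb$ admits a bounded-above projective resolution by finitely generated modules so that $\Hom_A(\Pb,\Yb)$ commutes with $S^{-1}$. Once this is in hand, both halves of the bijection reduce to clearing denominators, combined with the automatic invertibility of elements of $S$ on objects of $\C_i$ proved in the first step.
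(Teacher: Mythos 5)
Your proposal is correct and follows essentially the same route as the paper: the $S^{-1}A$-module structure comes from showing $\operatorname{cone}(\mu_s)\in\D_{i-1}$ (the paper's Lemma \ref{s-local}, which you prove with the same localization-at-coheight-$i$-primes argument, spelled out via the long exact cohomology sequence), and bijectivity of $\eta_S$ is reduced to the flat-base-change isomorphism $S^{-1}\Hom_{\D^b(A)}(\Xb,\Yb)\cong\Hom_{\D^b(S^{-1}A)}(S^{-1}\Xb,S^{-1}\Yb)$ (the paper's Lemma \ref{d-local}), with the identical clearing-of-denominators arguments for injectivity and surjectivity. The only nitpick is the phrase ``height-$i$ prime'' where you mean a prime $P$ with $\dim A/P=i$; the surrounding text makes the intended meaning clear.
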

We need a few preliminaries to prove this result.
We first show
\begin{lemma}
  \label{d-local} Let $\Xb, \Yb \in \D_i$. Suppose $\Supp_i(\Xb), \Supp_i(\Yb) \subseteq \{ P_1, \ldots, P_m \} $ with $\dim A/P_j = i$ for $j = 1, \ldots, m$.  Let $S = A \setminus \cup_{j = 1}^{m} P_j$. Then the natural map
  $$ S^{-1}\Hom_{\D^b(A)}(\Xb, \Yb) \rt \Hom_{\D^b(S^{-1}A)}(S^{-1}\Xb, S^{-1}\Yb), $$
  is an isomorphism.
\end{lemma}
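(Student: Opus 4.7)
The plan is to reduce both sides to standard Hom-complex computations via a projective resolution of $\Xb$, then use that localization is flat and the resolving terms are finitely presented so that everything commutes with $S^{-1}$.

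First, since $A$ is Noetherian and $\Xb \in \D^b(A)$ has finitely generated cohomology, I choose a quasi-isomorphism $\xi \colon \Pb \rt \Xb$ where $\Pb$ is a bounded-above complex of finitely generated projective $A$-modules. Because localization is exact and carries finitely generated projectives to finitely generated projectives, $S^{-1}\xi \colon S^{-1}\Pb \rt S^{-1}\Xb$ is a quasi-isomorphism of bounded-above complexes of finitely generated projective $S^{-1}A$-modules. I may also replace $\Yb$ by a quasi-isomorphic bounded complex of $A$-modules, so the Hom complexes below have only finitely many nonzero products in each degree. By the standard computation via projective resolutions,
\[
\Hom_{\D^b(A)}(\Xb, \Yb) = H^0\bigl(\Hom^\bullet_A(\Pb, \Yb)\bigr),
\]
and likewise
\[
\Hom_{\D^b(S^{-1}A)}(S^{-1}\Xb, S^{-1}\Yb) = H^0\bigl(\Hom^\bullet_{S^{-1}A}(S^{-1}\Pb, S^{-1}\Yb)\bigr).
\]

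Next, in each cohomological degree $n$, since $\Pb$ is bounded above and $\Yb$ is bounded, only finitely many indices $k$ contribute to $\Hom^n_A(\Pb, \Yb) = \prod_k \Hom_A(P^k, Y^{k+n})$, so this is a finite product. Each $P^k$ is finitely presented and $S^{-1}A$ is flat over $A$, so the natural map
\[
S^{-1}\Hom_A(P^k, Y^{k+n}) \rt \Hom_{S^{-1}A}(S^{-1}P^k, S^{-1}Y^{k+n})
\]
is an isomorphism. Assembling these pieces over the finite range of relevant $k$ yields an isomorphism of complexes $S^{-1}\Hom^\bullet_A(\Pb, \Yb) \cong \Hom^\bullet_{S^{-1}A}(S^{-1}\Pb, S^{-1}\Yb)$. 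Taking $H^0$, which commutes with the exact functor $S^{-1}$, gives the claim.

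The only delicate step is the first one: existence of a quasi-isomorphism onto $\Xb$ from a bounded-above complex of finitely generated projectives. This is standard over a Noetherian ring for objects of $\D^b(A)$, produced by the usual stepwise lifting argument starting from the top cohomology and covering cycles by finitely generated free modules at each stage. Everything thereafter is bookkeeping, resting on the principle that $S^{-1}$ commutes with $\Hom_A(P,-)$ when $P$ is finitely presented. Observe that the hypothesis on $\Supp_i(\Xb)$ and $\Supp_i(\Yb)$ is not used in this lemma; it enters only in the ambient Proposition \ref{S}, where it guarantees that the localization map $w \mapsto S^{-1}w$ inverts the relevant quasi-isomorphisms.
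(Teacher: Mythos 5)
Your proposal is correct and follows essentially the same route as the paper: compute both Hom groups as $H^0$ of the Hom complex built from a bounded-above resolution of $\Xb$ by finitely generated projectives, note that each degree of that Hom complex is a finite product of $\Hom$'s out of finitely presented modules so that it commutes with the flat localization $S^{-1}$, and then commute $S^{-1}$ past $H^0$. Your closing remark that the support hypothesis is not actually used in this lemma is also accurate; the paper's proof likewise makes no use of it.
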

\begin{proof}
Let $\K$ denote the homotopy category of complexes $\Zb$ with $\Zb^n = 0$ for $n \gg 0$, $\Zb^n$ finitely generated $A$-module for all $n$ and $H^n(\Zb) = 0$ for all $n \ll 0$.
Let $\Pb$ be a projective resolution of $\Xb$. Then
$$\Hom_{\D^b(A)}(\Xb, \Yb) = \Hom_\K(\Pb, \Yb) = H^0(\Hom_A(\Pb, \Yb)).$$
Note we are assuming that $\Yb$ is a bounded complex. So note that $\Hom_A(\Pb, \Yb)^n $ is finitely generated $A$-module for all $n \in \Z$. So we have
\begin{align*}
  S^{-1}\Hom_{\D^b(A)}(\Xb, \Yb) &=  S^{-1}(H^0(\Hom_A(\Pb, \Yb)) \\
  &\cong H^0( S^{-1}\Hom_A(\Pb, \Yb)) \\
  & \cong H^0(\Hom_{S^{-1}A}(S^{-1}\Pb, S^{-1}\Yb)\\
  &= \Hom_{\D^b(S^{-1}A)}(S^{-1}\Xb, S^{-1}\Yb).
\end{align*}
The last equality holds as $S^{-1}\Pb$ is a projective resolution of $S^{-1}\Xb$.
\end{proof}
Next we show
\begin{lemma}
  \label{s-local} Let $\Xb \in \D_i$. Suppose $\Supp_i(\Xb) \subseteq \{ P_1, \ldots, P_m \} $ with $\dim A/P_j = i$ for $j = 1, \ldots, m$.  Let $S = A \setminus \cup_{j = 1}^{m} P_j$. Let
  $\mu_s \colon \Xb \rt \Xb$ be mutiplication by $s$. Then $cone(\mu_s) \in \D_{i-1}$.
\end{lemma}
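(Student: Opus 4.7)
The plan is to analyze the cone via its long exact cohomology sequence and argue primewise. I would start with the defining triangle $\Xb \xrightarrow{\mu_s} \Xb \rt \cone(\mu_s) \rt \Xb[1]$ and take cohomology to obtain, for every $n \in \Z$, a short exact sequence
\[
0 \rt H^n(\Xb)/s H^n(\Xb) \rt H^n(\cone(\mu_s)) \rt \left(0:_{H^{n+1}(\Xb)} s\right) \rt 0.
\]
Both outer terms are annihilated by $s$, so $H^n(\cone(\mu_s))$ is annihilated by $s^2$. This is the one computational input.

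Next I would verify $\dim H^n(\cone(\mu_s)) \leq i - 1$ for each $n$. Since $\cone(\mu_s) \in \D_i$, we know a priori that $\dim H^n(\cone(\mu_s)) \leq i$. Suppose for contradiction that some prime $P$ with $\dim A/P = i$ lies in $\Supp H^n(\cone(\mu_s))$. From the above short exact sequence, $P$ must lie in $\Supp H^n(\Xb) \cup \Supp H^{n+1}(\Xb)$. As $\dim H^k(\Xb) \leq i$ for all $k$, such a $P$ is a minimal element of the support of one of these cohomology modules, so the corresponding localization has finite nonzero length over $A_P$. Hence $\Xb_P \neq 0$ in $\D^b(A_P)$, i.e., $P \in \Supp_i(\Xb) \subseteq \{P_1, \ldots, P_m\}$.

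But then $s \in S = A \setminus \bigcup_{j=1}^m P_j$ implies $s \notin P$, so $s$ is a unit in $A_P$. Localizing the annihilation $s^2 \cdot H^n(\cone(\mu_s)) = 0$ at $P$ then forces $H^n(\cone(\mu_s))_P = 0$, contradicting $P \in \Supp H^n(\cone(\mu_s))$. Therefore no such prime exists, and $\cone(\mu_s) \in \D_{i-1}$ as required.

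I do not anticipate a serious obstacle; the only point to be careful about is the identification of primes of dimension exactly $i$ in the support with elements of $\Supp_i(\Xb)$, which uses the dimension bound on the cohomology of $\Xb$ to ensure such primes are minimal in the relevant supports and hence contribute nonzero localizations.
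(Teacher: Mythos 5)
Your proof is correct and follows essentially the same strategy as the paper: check that the cone vanishes (cohomologically) at every prime of dimension $i$, using that such a prime in the support must be one of the $P_j$, where $s$ becomes a unit. The paper localizes the triangle directly instead of passing through the long exact sequence and the $s^2$-annihilation, but this is only a cosmetic difference.
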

\begin{proof}
  Let $Q$ be a prime ideal such that $\dim A/Q = i$. If $Q \neq P_i$ for all $i$ then ${\Xb}_Q = 0$. So $cone(\mu_s)_Q = 0$.
  If $Q = P_i$ for some $i$ then as $s$ is a unit in $A_Q$ we have that the induced map $H^n(\mu_s) \colon H^n({\Xb}_Q) \rt H^n({\Xb}_Q)$ is an isomorphism for all $n \in \Z$. So
  $H^n(cone(\mu_s)_Q) = 0$ for all $n \in \Z$. It follows that $cone(\mu_s)_Q = 0$. Thus $cone(\mu_s) \in \D_{i-1}$.
\end{proof}

We now give
\begin{proof}[Proof of Proposition \ref{S}]
We first show that $\Hom_{\C_i}(\Xb, \Yb)$ is a $S^{-1}A$-module. It suffices to show that $\mu_s \colon \Yb \rt \Yb$ is invertible for every $s \in S$. This follows from Lemma \ref{s-local}.
As $\eta$ is an $A$-linear map between two $S^{-1}A$-modules it is infact $S^{-1}A$-linear.

The natural map $S^{-1}\Hom_{\D^b(A)}(\Xb, \Yb) \rt \Hom_{\D^b(S^{-1}A)}(S^{-1}\Xb, S^{-1}\Yb)$ factors through $\eta(\Xb, \Yb)$. By Lemma \ref{d-local} it follows that $\eta(\Xb, \Yb)$vis surjective. We show $\eta(\Xb, \Yb)$ is injective. Let $\phi = h \circ w^{-1} \colon \Xb \rt \Yb$ be a map with $\eta(\phi) = 0$. We note that $\eta(\phi) = S^{-1}h \circ (S^{-1}w)^{-1}$.
It follows that $S^{-1}h = 0$. By \ref{d-local} it follows that there exists $s \in S$ such that $\mu_s \circ h = 0$ in $\Hom_{\D}(\Xb, \Yb)$. But $\mu_s$ is invertible in $\C_i$. So $h = 0$ in $\C_i$. Therefore $\phi = 0$. Thus $\eta(\Xb, \Yb)$ is injective. The result follows.
\end{proof}

As a consequence we have
\begin{corollary}\label{Artin}
Let $\Xb \in \C_i$. Then $E =\Hom_{\C_i}(\Xb, \Xb)$ is a (left) Artinian ring. If $\Xb$ is indecomposable then $E$ is  local (possibly non-commutative).
\end{corollary}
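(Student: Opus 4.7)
The plan is to apply Proposition~\ref{S} to transfer the question to the derived category of the semi-local localization $S^{-1}A$, and then to prove a finite-length statement that implies Artinianness. Fix a representative in $\D_i$ of $\Xb$ and let $\Supp_i(\Xb) = \{P_1, \ldots, P_m\}$, which is finite by construction. Set $S = A \setminus \bigcup_{j=1}^m P_j$. By Proposition~\ref{S} the map $\eta_S$ is an $S^{-1}A$-module isomorphism $E = \Hom_{\C_i}(\Xb, \Xb) \cong \Hom_{\D^b(S^{-1}A)}(S^{-1}\Xb, S^{-1}\Xb)$, and inspection of the formula $\phi \mapsto S^{-1}h \circ (S^{-1}w)^{-1}$ shows it is multiplicative, hence a ring isomorphism.

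Next I would verify that each $H^n(S^{-1}\Xb)$ has finite length over $S^{-1}A$. The primes $P_j$ are pairwise incomparable (two distinct primes with the same value of $\dim A/P$ cannot be nested), so the maximal ideals of $S^{-1}A$ are exactly $S^{-1}P_1, \ldots, S^{-1}P_m$. For any prime $S^{-1}Q \in \Supp H^n(S^{-1}\Xb)$, prime avoidance applied to $Q \subseteq \bigcup_j P_j$ yields $Q \subseteq P_j$ for some $j$, giving $\dim A/Q \geq i$; combined with $\dim A/Q \leq \dim H^n(\Xb) \leq i$ this forces $Q = P_j$. Thus $H^n(S^{-1}\Xb)$ is supported only at the maximal ideals of $S^{-1}A$, and each localization $H^n(\Xb)_{P_j}$ is a finitely generated $A_{P_j}$-module supported only at the maximal ideal, hence of finite length. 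It follows that $H^n(S^{-1}\Xb)$ has finite length over $S^{-1}A$.

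The remaining step is a standard fact: for a bounded complex $\Zb$ over a Noetherian ring $R$ whose cohomologies all have finite length, $\Hom_{\D^b(R)}(\Zb, \Zb)$ is of finite length as an $R$-module. I would prove this by induction on the amplitude of $\Zb$ using the canonical truncation triangle $\tau^{\leq n-1}\Zb \rt \Zb \rt H^n(\Zb)[-n] \rt \tau^{\leq n-1}\Zb[1]$; the long exact sequences for $\Hom(-, \Zb)$ and $\Hom(\Zb, -)$ reduce the problem to computing $\Ext^p_R(M, N)$ for finite length $M, N$, and each such $\Ext$ is of finite length (resolve $M$ by finitely generated frees). Only finitely many indices contribute since $\Zb$ is bounded. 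Hence $E$ has finite length over $S^{-1}A$ and so is left Artinian as a ring.

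Finally, assume $\Xb$ is indecomposable. Any idempotent $e \in E$ splits by Corollary~\ref{split-idemptents}, giving $\Xb \cong Y \oplus Z$ and forcing $e \in \{0, 1\}$, so $E$ has no non-trivial idempotents. Since $E$ is left Artinian, $J(E)$ is nilpotent and $E/J(E)$ is semisimple Artinian; idempotents lift modulo the nil ideal $J(E)$, so $E/J(E)$ also has no non-trivial idempotents, which forces it to be a division ring, making $E$ local. The principal obstacle is the finite-length bound for the derived endomorphism ring, requiring the truncation induction and the finite-length property of $\Ext$ between finite length modules; everything else follows cleanly from Proposition~\ref{S} and standard Artinian ring theory.
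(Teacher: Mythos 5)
Your proof is correct and follows essentially the same route as the paper: pass to the semi-local ring $S^{-1}A$ via Proposition~\ref{S}, observe that $S^{-1}\Xb$ has finite-length cohomology so its derived endomorphism ring has finite length and is therefore left Artinian, and then combine idempotent splitting with the standard ``Artinian with no nontrivial idempotents implies local'' argument. You merely spell out details the paper leaves implicit (that $\eta_S$ is a ring isomorphism, and the truncation induction showing the derived Hom has finite length).
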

\begin{proof}
  Suppose $\Supp_i(\Xb) \subseteq \{ P_1, \ldots, P_m \} $ with $\dim A/P_j = i$ for $j = 1, \ldots, m$.  Let $S = A \setminus \cup_{j = 1}^{m} P_j$.
  We note that  the ideals $S^{-1}P_j$ are precisely the maximal ideals in $S^{-1}A$. It follows that $\Hom_{\D^b(S^{-1}A)}(S^{-1}\Xb, S^{-1}\Xb)$ has finite length as a $S^{-1}A$ module and so left Artinian. By \ref{S} it follows that $E$ is a left Artinian ring.

 \s  As idempotents split in $\C_i$ it follows that if $\Xb$ is indecomposable then $E$ has no idempotents, see \cite[1.1]{LW}.
  As $E$ is an Artinian ring it follows that $E$ is local (use \cite[1.5]{LW}
\end{proof}
As a consequence we have
\begin{theorem}
\label{krs-body} For all $i$, $\C_i(A)$ is a Krull-Remak-Schmidt(KRS) category
\end{theorem}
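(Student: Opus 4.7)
The plan is to assemble the pieces already established. Recall that an additive category is Krull-Remak-Schmidt precisely when every object decomposes as a finite direct sum of objects whose endomorphism rings are local (this is the standard criterion, e.g. as in Krause's survey on KRS categories). So I would simply verify these two ingredients for $\C_i(A)$.

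First, existence of the decomposition: by Lemma \ref{direct}(5), any $\Wb \in \C_i$ is a finite direct sum of indecomposables. This is where the nonnegative integer-valued function $h(-)$ does the work, since $h$ is additive on direct sums and vanishes only on the zero object, so induction on $h(\Wb)$ terminates. I would cite \ref{direct}(5) directly.

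Second, locality of endomorphism rings of indecomposables: this is exactly Corollary \ref{Artin}. For any indecomposable $\Xb \in \C_i$, idempotents split in $\C_i$ (Corollary \ref{split-idemptents}), so $\operatorname{End}_{\C_i}(\Xb)$ has no nontrivial idempotents; combined with the fact that this endomorphism ring is left Artinian (proved via Proposition \ref{S} by reducing to the semilocal ring $S^{-1}A$), it must be local.

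Therefore, combining \ref{direct}(5) and \ref{Artin} gives Krull-Remak-Schmidt: uniqueness of the decomposition follows formally from locality of the endomorphism rings of the summands, by the classical Azumaya-Krull-Schmidt theorem. I do not expect any real obstacle here since all the substantive work is already done in Sections 2 and 4; the only thing to state cleanly is the invocation of the Azumaya-Krull-Schmidt theorem.
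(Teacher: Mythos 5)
Your proposal is correct and follows exactly the same route as the paper: the decomposition into indecomposables comes from Lemma \ref{direct}(5) and the locality of endomorphism rings from Corollary \ref{Artin}, with the KRS property then following formally. No differences to report.
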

\begin{proof}
By \ref{direct} any $\Xb$ decomposes as a finite direct sum of indecomposables. By \ref{Artin} endomorphism rings of indecomposables are local. Thus $\C_i$ is a KRS-category.
\end{proof}

\section{Proof of Theorem \ref{decomp}}
In this section we prove Theorem \ref{decomp} and give a proof of Corollary \ref{ar}. We need the following well-known result.
\s
\label{d-prelim}
Let $A$ be a semi-local ring with maximal ideals $\m_1, \ldots, \m_r$. Note $ \D^b_0(A)$ is a Krull-Remak-Schmidt category. For $\Xb \in \D^b_0(A)$ note that $\Supp_0(\Xb) \subseteq \{\m_1, \ldots, \m_r \}$. Furthermore if $\Xb \in \D^b_0(A)$ is indecomposable then $\Supp_0(\Xb) = \{ \m_j \} $ for some $j$. If $\Yb \in \D^b_0(A)$ and if  $\Supp_0(\Xb) \cap \Supp_0(\Yb) = \emptyset$
then $\Hom_{\D^b(A)}(\Xb, \Yb) = 0$. Furthermore we have $\D^b_0(A) \cong \bigoplus_{j = 1}^{s} \D_{\m_j}$ where $\D_{\m_j}$ is the thick subcategory in $\D^b_0(A)$ consisting of complexes $\Xb$ with $\Supp_0(\Xb) \subseteq \{\m_j \}$.

\begin{lemma}\label{indecomp-supp}
Let $\Xb$ be indecomposable in $\C_i$. Then $\Supp_i(\Xb) = \{ P \}$ for some prime $P$ with $\dim A/P = i$.
\end{lemma}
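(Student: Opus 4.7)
The plan is to argue by contradiction, using the localization machinery developed in Proposition \ref{S} together with the semi-local decomposition recalled in \ref{d-prelim}. Since $\Xb$ is indecomposable it is nonzero in $\C_i$, so by Lemma \ref{direct}(3) we have $\Supp_i(\Xb) \neq \emptyset$. Suppose for contradiction that $\Supp_i(\Xb) = \{P_1, \ldots, P_m\}$ with $m \geq 2$, and set $S = A \setminus \bigcup_{j=1}^m P_j$.

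The first substantive step is to verify that $S^{-1}\Xb$ lies in $\D_0^b(S^{-1}A)$, i.e.\ that each $S^{-1}H^n(\Xb)$ has finite length over $S^{-1}A$. Suppose $Q \in \Supp H^n(\Xb)$ with $Q \cap S = \emptyset$; then $Q \subseteq P_j$ for some $j$. Since $\dim A/P_j = i$, any strict inclusion $Q \subsetneq P_j$ together with a saturated chain from $P_j$ in $A$ would force $\dim A/Q \geq i+1$, contradicting $\dim A/Q \leq \dim H^n(\Xb) \leq i$. Hence $Q = P_j$, so the $S^{-1}P_j$ are the only primes in $\Supp S^{-1}H^n(\Xb)$, and these are maximal in $S^{-1}A$, proving the claim.

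Next, by \ref{d-prelim} applied to the semi-local ring $S^{-1}A$ with maximal ideals $S^{-1}P_1, \ldots, S^{-1}P_m$, there is a decomposition
\[
S^{-1}\Xb \cong \bigoplus_{j=1}^m \Xb_j \quad \text{in } \D_0^b(S^{-1}A),
\]
where each $\Xb_j$ is supported at $\{S^{-1}P_j\}$. Each $\Xb_j$ is nonzero because $P_j \in \Supp_i(\Xb)$ by hypothesis. This decomposition produces a complete set of orthogonal idempotents $e_1, \ldots, e_m$ in $\Hom_{\D^b(S^{-1}A)}(S^{-1}\Xb, S^{-1}\Xb)$, none of them trivial.

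Finally, Proposition \ref{S} provides an isomorphism of rings
\[
\Hom_{\C_i}(\Xb, \Xb) \xrightarrow{\ \eta_S\ } \Hom_{\D^b(S^{-1}A)}(S^{-1}\Xb, S^{-1}\Xb),
\]
so the $e_j$'s pull back to a nontrivial complete set of orthogonal idempotents in $\Hom_{\C_i}(\Xb, \Xb)$. Since idempotents split in $\C_i$ by Corollary \ref{split-idemptents}, we obtain a decomposition $\Xb \cong \bigoplus_{j=1}^m \tilde{\Xb}_j$ in $\C_i$ with each $\tilde{\Xb}_j$ nonzero (its image under $\eta_S$ is the nonzero object $\Xb_j$), contradicting indecomposability of $\Xb$. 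The main technical point is the dimension argument in the second step, needed to land $S^{-1}\Xb$ inside $\D_0^b(S^{-1}A)$ so that \ref{d-prelim} applies; everything after that is formal transport via Proposition \ref{S} and Corollary \ref{split-idemptents}.
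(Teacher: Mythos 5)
Your proof is correct and follows essentially the same route as the paper: localize at $S = A \setminus \bigcup P_j$, observe that $S^{-1}\Xb$ lands in $\D_0^b(S^{-1}A)$, and invoke the semi-local decomposition of \ref{d-prelim} together with Proposition \ref{S} to transport the resulting (non)decomposability back to $\C_i$. The paper phrases this directly ($\Xb$ indecomposable forces $S^{-1}\Xb$ indecomposable, hence $r=1$) while you run the contrapositive via pulled-back idempotents, and you helpfully spell out the dimension argument showing $S^{-1}H^n(\Xb)$ has finite length, which the paper only asserts.
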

\begin{proof}
Say $\Supp_i(\Xb) = \{ P_1, \ldots, P_r \}$ for some primes $P_j$ with $\dim A/P_j = i$ for all $j$. Let $S = A \setminus \bigcup_{j =1}^{r} P_j$. We note that $S^{-1}A$ is semi-local with maximal ideals $S^{-1}P_j$ where $j =1, \ldots, r$. We note that $S^{-1}\Xb \in D^b_0(S^{-1}A)$ and by \ref{d-local} is indecomposable. By \ref{d-prelim} it follows that $r = 1$.
\end{proof}
Next we show
\begin{lemma}\label{split}
  Let $\Xb, \Yb \in \C_i$ be indecomposable. Let $\Supp_i(\Xb) =\{  P \}$ and $\Supp_i(\Yb) = \{ Q \}$. If $P \neq Q$ then $\Hom_{\C_i}(\Xb, \Yb) = 0$.
\end{lemma}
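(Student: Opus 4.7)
The plan is to reduce the computation in $\C_i$ to a computation in $\D^b$ of a semi-local ring, where the result is essentially automatic from the direct-sum decomposition recalled in \ref{d-prelim}. Concretely, set $S = A \setminus (P \cup Q)$. Since $\dim A/P = \dim A/Q = i$ and $P \neq Q$, the ring $S^{-1}A$ is semi-local with exactly two maximal ideals, namely $S^{-1}P$ and $S^{-1}Q$.

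Next I would apply Proposition \ref{S} with the set $\{P_1, P_2\} = \{P, Q\}$. Since $\Supp_i(\Xb) = \{P\} \subseteq \{P, Q\}$ and $\Supp_i(\Yb) = \{Q\} \subseteq \{P, Q\}$, the proposition yields an isomorphism
\[
\eta_S(\Xb, \Yb) \colon \Hom_{\C_i}(\Xb, \Yb) \xrightarrow{\ \cong\ } \Hom_{\D^b(S^{-1}A)}(S^{-1}\Xb, S^{-1}\Yb).
\]
Thus it is enough to show the right-hand side vanishes.

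Now $S^{-1}\Xb$ and $S^{-1}\Yb$ lie in $\D^b_0(S^{-1}A)$, because every cohomology module of $\Xb$ or $\Yb$ has dimension $\leq i$ and after inverting $S$ only the cohomology supported at the maximal ideals of $S^{-1}A$ survives. Moreover, $S^{-1}\Xb$ is supported at $\{S^{-1}P\}$ and $S^{-1}\Yb$ at $\{S^{-1}Q\}$ (use the definition of $\Supp_i$ together with the fact that a localized complex is zero iff all its cohomology modules vanish). Since the supports are disjoint, the last sentence of \ref{d-prelim} gives
\[
\Hom_{\D^b(S^{-1}A)}(S^{-1}\Xb, S^{-1}\Yb) = 0.
\]
Combining with the isomorphism above, we conclude $\Hom_{\C_i}(\Xb, \Yb) = 0$.

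The argument is essentially a direct assembly of results already proved, so there is no serious obstacle; the only point that needs a brief sanity check is that $\Supp_i$ of the localized complexes coincides with $\Supp_0$ in $\D^b_0(S^{-1}A)$, which is immediate from the correspondence between primes of $A$ contained in $P \cup Q$ and primes of $S^{-1}A$.
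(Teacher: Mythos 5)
Your proof is correct and follows essentially the same route as the paper: localize at $S = A\setminus(P\cup Q)$, observe the localized complexes lie in $\D^b_0(S^{-1}A)$ with disjoint supports so that \ref{d-prelim} gives vanishing, and transfer back to $\Hom_{\C_i}$. Your citation of Proposition \ref{S} for the transfer step is in fact slightly cleaner than the paper's appeal to \ref{d-local}, since \ref{S} is the statement that directly identifies $\Hom_{\C_i}(\Xb,\Yb)$ with $\Hom_{\D^b(S^{-1}A)}(S^{-1}\Xb,S^{-1}\Yb)$.
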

\begin{proof}
Let $S = A \setminus (P \cup Q)$. Then $S^{-1}A$ is semi-local with maximal ideals $S^{-1}P$ and $S^{-1}Q$. We note that $\Supp_0 S^{-1}\Xb = \{ S^{-1} P \}$ and $\Supp_0 S^{-1}\Yb = \{S^{-1}Q \}$.
By \ref{d-prelim} we get $\Hom_{D^b(S^{-1}A)}(S^{-1}\Xb, S^{-1}\Yb) = 0$. The result follows from \ref{d-local}.
\end{proof}

\s\label{dec-P} Let $P$ be a prime ideal in $A$ with $\dim A/P = i$. Let $\V(P) = $ be the full subcategory of complexes $\Xb$ in $\C_i(A)$ with $\Supp_i(\Xb) \subseteq \{ P \}$. It is elementary to see that
$\V(P)$ is a thick subcategory of $\C_i(A)$. As $\C_i(A)$ is a Krull-Schmidt category, it follows from Lemmas \ref{indecomp-supp} and \ref{split} that
$$\C_i(A) =  \bigoplus_{\stackrel{P}{\dim A/P = i}} \V(P).$$

Next we give
\begin{proof}[Proof of Theorem \ref{decomp}]
By \ref{dec-P} it suffices to prove that $\V(P) \cong \D^b_0(A_P)$. We have the obvious functor
$\Phi \colon \V(P) \rt \D^b_0(A_P)$ given by $\Xb \mapsto { \Xb}_P$. By \ref{S} it follows that $\Phi$ is full and faithful. It suffices to prove $\Phi$ is dense.
Let $\Yb \in \D^b_0(A_P)$. We have to show existence of a complex $\Xb$ in $\V(P)$ with $\Phi(\Xb) = \Yb$. We prove this by inducting on $\ell(H^*(\Yb))$.

If $\ell(H^*(\Yb)) = 1$ then note $\Yb \cong \kappa(P)[c]$ for some $c \in \Z$ (here $\kappa(P) = $ residue field of $A_P$). Let $\Xb = A/P[c]$ a stalk complex concentrated in degree $c$.
Notic $\Phi(\Xb) = \Yb$.

Now let $\ell(H^*(\Yb)) = r$  and the result is known for complexes $\Zb$ with \\  $\ell(H^*(\Zb)) < r$.  Let $\Pb$ be a projective resolution of $\Yb$, we may assume (after shifting) that $H^0(\Pb) \neq 0$ and $\Pb^n = 0$ for $n > 0$. Let $\Qb$ be projective resolution of $\kappa(P)$. We have an obvious surjective  map $H^0(\Pb) \rt \kappa(P)$. This yields a chain map $\Pb \rt \Qb$.
After taking a sufficiently high good truncation $\Zb$ of $\Qb$ we get a triangle in $\D^b_0(A_P)$
\[
\Yb \rt \Zb \xrightarrow{f} \Wb \rt \Yb[1]
\]
where $\ell(H^*(\Wb)) = r -1$. By induction hypotheses there exist $\Ub, \Vb \in \V(P)$ such that there exists isomorphisms $g \colon \Phi(\Ub) \rt \Zb$ and $h \colon \Phi(\Vb) \rt \Wb$.
The map  $h^{-1} \circ f \circ g = \Phi(\theta)$ for some $\theta \colon \Ub \rt \Vb$ (as $\Phi$ is fully-faithful). We have a triangle in $\V(P)$
\[
\Kb \rt \Ub \xrightarrow{\theta} \Vb \rt \Kb[1].
\]
It is elementary to verify that $\Phi(\Kb) \cong \Yb$. The result follows.
\end{proof}
We now give
\begin{proof}
  If $A$ is regular then so is $A_P$ for any prime $P$. By \cite[1.1]{P2},  $D^b_0(A_P)$ has AR-triangles. The result follows.
\end{proof}

\end{document}